\documentclass[11pt,reqno]{amsart}

\usepackage{amsthm,amsmath,amssymb}
\usepackage{graphicx}
\usepackage{float}
\usepackage[colorlinks=true,
linkcolor=blue,
urlcolor=blue,
citecolor=blue]{hyperref}
\usepackage{mathtools}
\usepackage{relsize}
\usepackage{ytableau}
\usepackage{tikz}
 \usepackage{enumerate}
\usepackage[toc,page]{appendix}
\usepackage{lscape}
\usepackage{multirow}
\usepackage{adjustbox,expl3,etoolbox} 
\usepackage[margin = 3.5cm]{geometry}
\usepackage{longtable}

\newtheorem{thm}{Theorem}[section]
\newtheorem{lem}[thm]{Lemma}
\theoremstyle{definition}

\newtheorem{prop}[thm]{Proposition}
\newtheorem{conj}[thm]{Conjecture}

\DeclareMathOperator\sym{Sym}
\DeclareMathOperator\alt{Alt}

\DeclareMathOperator{\cay}{Cay}

% complexes

\letcs\replicate{prg_replicate:nn}

\title[]{On the second eigenvalue of a Cayley graph of the Symmetric group}

{\author[R. Maleki]{Roghayeh Maleki\textsuperscript{1}}
	\thanks{\textsuperscript{1} Department of Mathematics and Statistics, University of Regina}
	\address{Department of Mathematics and Statistics, University of Regina, Regina, Saskatchewan S4S 0A2, Canada}}\email{rmaleki@uregina.ca}
{\author[A. S. Razafimahatratra]{Andriaherimanana Sarobidy Razafimahatratra\textsuperscript{1,*}}
	\thanks{\textsuperscript{*} Corresponding author}
	\address{Department of Mathematics and Statistics, University of Regina,
		Regina, Saskatchewan S4S 0A2, Canada}\email{sarobidy@phystech.edu}}

\begin{document}	

\subjclass[2010]{Primary 05C50; Secondary 05C25}

\keywords{Second eigenvalue, Cayley graphs,  Symmetric Group, equitable partitions}

\date{\today}

\maketitle

\begin{abstract}
	In 2020, Siemons and Zalesski [On the second eigenvalue of some Cayley graphs of the symmetric
	group. {\it arXiv preprint arXiv:2012.12460}, 2020] determined the second eigenvalue of the Cayley graph $\Gamma_{n,k} = \cay(\sym(n), C(n,k))$ for $k = 0$ and $k=1$, where $C(n,k)$ is the conjugacy class of $(n-k)$-cycles. In this paper, it is proved that for any $n\geq 3$ and $k\in \mathbb{N}$ relatively small compared to $n$,  the second eigenvalue of $\Gamma_{n,k}$  is the eigenvalue afforded by the irreducible character of $\sym(n)$ that corresponds to the partition $[n-1,1]$. As a byproduct of our method, the result of Siemons and Zalesski when $k \in \{0,1\}$ is retrieved. Moreover, we prove that the second eigenvalue of $\Gamma_{n,n-5}$ is also equal to the eigenvalue afforded by the irreducible character of the partition $[n-1,1]$.

\end{abstract}

\section{Introduction}

The second eigenvalue of a graph is a well-studied parameter in spectral graph theory (see \cite{cvetkovic1995second,neumaier1982second,nilli1991second} for instance). Given a graph $X = (V,E)$, the eigenvalues of $X$ (i.e., the eigenvalues of its adjacency matrix) are real and can be arranged in the following way $\lambda_1(X) \geq \lambda_2(X)\geq \ldots \geq \lambda_{|V(X)|}(X)$. When $X$ is a regular graph, the largest eigenvalue $\lambda_1(X)$ is equal to the valency of $X$. This paper is concerned with the study of the second (largest) eigenvalue, $\lambda_2(X)$, of a Cayley graph of the symmetric group. 

Let $G$ be a finite group with identity $e$, and let $C$ be a subset of $G\setminus\{e\}$ with the property that if $x\in C$ then $x^{-1} \in C$. The \emph{Cayley graph} $\cay(G,C)$ is the simple and undirected graph with vertex set $G$ and two group elements $g$ and $h$ are adjacent if and only if $hg^{-1} \in C$. We say that the graph $\cay(G,C)$ is a \emph{normal Cayley graph} if $gCg^{-1} = C$, for any $g\in G$. In other words, $\cay(G,C)$ is a normal Cayley graph if and only if $C$ is a union of conjugacy classes of $G$. The graph $\cay(G,C)$ is regular with valency equal to $|C|$. Since the elements of the left-regular representation $\mathrm{R}(G) = \left\{ \rho_g : G\to G,\ \rho_g(x) = gx \mid g\in G \right\}$ of $G$ act as automorphisms of $\cay(G,C)$, the latter is also vertex-transitive.

Let $n$ and $k$ be two integers such that $0\leq k \leq n-1$. We denote the conjugacy class of the $(n-k)$-cycles of $\sym(n)$ by $C(n,k)$. That is,
\begin{align*}
	C(n,k) = \left\{ \sigma (1,2,3,\ldots,n-k) \sigma^{-1} \mid \sigma \in \sym(n) \right\}.
\end{align*}
Let $\Gamma_{n,k}$ be the Cayley graph of $\sym(n)$ with connection set equal to $C(n,k)$. In other words, $\Gamma_{n,k}: = \cay(\sym(n),C(n,k))$. The graph $\Gamma_{n,k}$ is vertex-transitive and regular of valency $|C(n,k)| = \binom{n}{k}(n-k-1)!$. Since $C(n,k)$ is a conjugacy class, $\Gamma_{n,k}$ is  also a \emph{normal Cayley graph}. 

A famous result of Babai \cite{babai1979spectra} links the eigenvalues of a normal Cayley graph $\cay(G,C)$ to the irreducible characters of $G$. In particular, each irreducible character of $G$ determines an eigenvalue of $\cay(G,C)$ and the spectrum of $\cay(G,C)$ (the multiset consisting of all the eigenvalues) can be determined with the knowledge of the set of all irreducible characters $\operatorname{Irr}(G)$ of $G$. 

The following theorem was recently proved by Siemons and Zalesski \cite{siemons2020second}.

\begin{thm}
	For $n\geq 5$, the second eigenvalue of $\Gamma_{n,k}$, for $k\in \{0,1\}$, is as follows.
	\begin{itemize}
		\item If $k = 0$, then $\lambda_2(\Gamma_{n,k}) = (n-2)! \mbox{ when $n$ is even}$ and $\lambda_2(\Gamma_{n,k}) = 2(n-3)!$ when $n$ is odd.
		\item If $k = 1$, then $\lambda_2(\Gamma_{n,k}) = 3(n-3)(n-5)! \mbox{ when $n$ is even}$ and $\lambda_2(\Gamma_{n,k}) = 2(n-2)(n-4)!$ when $n$ is odd.
	\end{itemize}\label{thm:large}
\end{thm}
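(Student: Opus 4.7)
My plan is to combine Babai's description of the spectrum of a normal Cayley graph with the Murnaghan--Nakayama rule for the characters of $\sym(n)$. Since $C(n,k)$ is a single conjugacy class, Babai's theorem gives the explicit formula
\[
\lambda_\chi \;=\; \frac{|C(n,k)|\,\chi(c)}{\chi(1)},\qquad c \in C(n,k),
\]
for the eigenvalue of $\Gamma_{n,k}$ afforded by an irreducible character $\chi$. Writing $\irr(\sym(n)) = \{\chi^\lambda : \lambda\vdash n\}$ with dimensions given by the hook length formula, the proof reduces to (i) identifying the partitions on which $\chi^\lambda(c)\neq 0$, (ii) evaluating those nonzero values via Murnaghan--Nakayama, (iii) substituting into the above formula, and (iv) picking out the second largest entry in the resulting finite list.

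For $k=0$ the element $c$ is an $n$-cycle. Murnaghan--Nakayama forces $\chi^\lambda(c)$ to vanish unless $\lambda=[n-j,1^j]$ is a hook, in which case $\chi^{[n-j,1^j]}(c)=(-1)^j$ and $\chi^{[n-j,1^j]}(1)=\binom{n-1}{j}$. The nonzero eigenvalues collapse to $\lambda_j=(-1)^j j!(n-1-j)!$ for $0\le j\le n-1$. The trivial character gives the maximum $\lambda_0=(n-1)!$, which coincides with the sign character's contribution when $n$ is odd (an $n$-cycle being even in that case) but equals $-(n-1)!$ when $n$ is even. Monotonicity of the ratio $|\lambda_{j+2}/\lambda_j|=(j+1)(j+2)/[(n-1-j)(n-2-j)]$ then shows that the second largest eigenvalue is $\lambda_{n-2}=(n-2)!$ (from $[2,1^{n-2}]$) for $n$ even and $\lambda_2=2(n-3)!$ (from $[n-2,1,1]$) for $n$ odd, matching the theorem.

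For $k=1$ the element $c$ has cycle type $(n-1,1)$. Peeling off a border strip of length $n-1$ first in Murnaghan--Nakayama forces the remaining shape to be the single cell $[1]$, so the strip must coincide with $\lambda\setminus\{(1,1)\}$. Enumerating connected skew shapes of $n-1$ cells with no $2\times 2$ block shows that the only $\lambda$ for which this occurs are
\[
[n],\qquad [1^n],\qquad\text{and}\qquad [a,2,1^d] \text{ with } a\ge 2,\ a+d=n-2,
\]
and a leg-length calculation yields $\chi^{[n]}(c)=1$, $\chi^{[1^n]}(c)=(-1)^n$, and $\chi^{[a,2,1^d]}(c)=(-1)^{d+1}$. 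Substituting and cancelling via the hook length formula, the double-hook eigenvalue simplifies to $\mu_{a,d}=(-1)^{d+1} a(d+2)(a-2)!\,d!$. The trivial character (and, when $n$ is even, the sign character as well) produces the maximum $n(n-2)!$, so $\lambda_2(\Gamma_{n,1})$ equals the largest positive $\mu_{a,d}$.

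The main technical obstacle is the monotonicity analysis locating this maximum. A short calculation yields $\mu_{a+2,d-2}/\mu_{a,d}=(a+2)(a-1)/[(n-a)(n-3-a)]$, which exceeds $1$ precisely when $a>(n-2)/2$. The positive $\mu_{a,d}$ therefore first decrease and then increase as $a$ varies over its admissible range, attaining their maximum at one of the two endpoints. For $n$ even the parity constraint forces $a$ odd, and the endpoints $a=3$ and $a=n-3$ give the same value $3(n-3)(n-5)!$, afforded by the conjugate partitions $[3,2,1^{n-5}]$ and $[n-3,2,1]$. For $n$ odd the parity constraint forces $a$ even; a direct comparison shows $a=2$ beats $a=n-3$ for $n\ge 7$, giving $\mu_{2,n-4}=2(n-2)(n-4)!$ from $[2,2,1^{n-4}]$. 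This combinatorial bookkeeping is exactly the kind of case-work that the equitable-partition method alluded to in the introduction is designed to bypass in the more general setting.
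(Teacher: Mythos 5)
Your argument is correct, and it reaches the theorem by a genuinely different route than the paper. The paper derives Theorem~\ref{thm:large} as a byproduct of the machinery built for Theorem~\ref{thm:main}: it restricts attention to the irreducible characters of degree less than $3\binom{n}{3}$ (Lemma~\ref{lem:irrecharless}), reads their eigenvalues off Table~\ref{table:eigenvalues}, and disposes of all higher-degree characters at once via the bound $b(G)\leq\sqrt{|G|}$ of Proposition~\ref{prop:max-degree}, which for $k\in\{0,1\}$ makes those eigenvalues negligibly small since $f^{\lambda\setminus\rho}=1$. You instead exploit the fact that for $k\leq 1$ the character values on $(n-k)$-cycles are supported on very few shapes: hooks $[n-j,1^j]$ when $k=0$, and $[n]$, $[1^n]$ and the double hooks $[a,2,1^d]$ ($a+d=n-2$) when $k=1$, with values $\pm 1$; this lets you write down the entire nonzero spectrum explicitly ($(-1)^j j!(n-1-j)!$, resp.\ $(-1)^{d+1}a(d+2)(a-2)!\,d!$, both of which I checked against the hook length formula and against Table~\ref{table:eigenvalues}) and locate the maximum by the ratio/unimodality argument, whose threshold $a>(n-2)/2$ is computed correctly. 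What your approach buys is a self-contained, elementary proof for $k\in\{0,1\}$ that needs neither the low-degree classification nor the $\sqrt{|G|}$ bound, and in fact yields the full spectrum; what the paper's approach buys is uniformity, since the same table and bound also drive the proof of Theorem~\ref{thm:main}. Two small points to tidy up: your endpoint comparison for $n$ odd, $k=1$ is stated ``for $n\geq 7$,'' but the case $n=5$ is covered because the admissible range degenerates to the single value $a=2$, which already gives $2(n-2)(n-4)!$; and in the disconnected cases ($n-k$ odd) the valency occurs with multiplicity two, so ``second eigenvalue'' must be read as the largest eigenvalue below the valency --- a convention you handle exactly as the paper does, by noting that the sign character reproduces the maximum.
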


Finding the second eigenvalue of $\Gamma_{n,k}$, for arbitrary $k\leq n$, is an open problem posed by Siemons and Zalesski in \cite{siemons2020second}. In this paper, we find $\lambda_2\left(\Gamma_{n,k}\right)$ when $k$ is relatively small. Our main result is the following.
\begin{thm}
	For any $n\geq 3$ and $k\in \mathbb{N}$ such that $ 2\leq  k \leq \min \left(n,2\log_{\frac{k}{e}} \left(\frac{n(n-2)}{2e}\right)-1\right)$, the second eigenvalue of $\Gamma_{n,k}$ is 
	\begin{align*}
	\lambda_2\left(\Gamma_{n,k} \right) = \frac{k-1}{n-1}\binom{n}{k}(n-k-1)!.
	\end{align*}
	Moreover,  $\lambda_2\left(\Gamma_{n,k} \right)$ is afforded by the irreducible character of $\sym(n)$ that corresponds to the partition $[n-1,1]$.\label{thm:main}
\end{thm}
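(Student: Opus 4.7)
My plan is to combine Babai's formula for the spectrum of a normal Cayley graph with the Murnaghan--Nakayama rule for character values of $\sym(n)$ on cycle-type classes. By Babai's theorem, each partition $\lambda \vdash n$ affords an eigenvalue
\[
\xi_\lambda = \frac{|C(n,k)|\,\chi^\lambda(c_{n-k})}{\chi^\lambda(1)}
\]
of $\Gamma_{n,k}$, where $c_{n-k}$ is any fixed $(n-k)$-cycle. For $\lambda=[n-1,1]$ the formula $\chi^{[n-1,1]}(\sigma)=\fix(\sigma)-1$ combined with the fact that $c_{n-k}$ has exactly $k$ fixed points gives $\xi_{[n-1,1]} = \frac{k-1}{n-1}\binom{n}{k}(n-k-1)!$, matching the claimed value. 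I would double-check this via the equitable partition of $\sym(n)$ into the $n$ right cosets of $\stab(n)\cong \sym(n-1)$: the resulting $n\times n$ quotient matrix has the form $aI+bJ$ and thus carries exactly the eigenvalues $a+(n-1)b = |C(n,k)|$ (the valency) and $a-b = \xi_{[n-1,1]}$ with multiplicity $n-1$, matching $\dim[n-1,1]$.

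The bulk of the argument is then to prove that for every $\lambda\vdash n$ with $\lambda\notin\{[n],[n-1,1]\}$ one has
\[
\left|\frac{\chi^\lambda(c_{n-k})}{\chi^\lambda(1)}\right| < \frac{k-1}{n-1}.
\]
Applying Murnaghan--Nakayama to the cycle type $(n-k,1^k)$ yields
\[
\chi^\lambda(c_{n-k}) = \sum_{\mu} (-1)^{\operatorname{ht}(\lambda/\mu)} f^\mu,
\]
summed over $\mu \vdash k$ such that $\lambda/\mu$ is an $(n-k)$-rim hook, where $f^\mu=\chi^\mu(1)$. In particular $\chi^\lambda(c_{n-k})=0$ unless $\lambda$ admits such a rim hook, and the triangle inequality gives
\[
|\chi^\lambda(c_{n-k})| \leq \sum_{\mu \vdash k} f^\mu,
\]
which equals the number of involutions in $\sym(k)$; by Stirling this is at most $(k/e)^{k/2}$ up to sub-exponential factors. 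This is precisely where the quantity $k/e$ appearing in the hypothesis enters the analysis.

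To complete the argument one needs a matching lower bound on $f^\lambda$ for all relevant $\lambda$. Since the non-vanishing of $\chi^\lambda(c_{n-k})$ forces $\lambda$ to contain an $(n-k)$-rim hook, the complementary piece has size exactly $k$, so $\lambda$ is a ``near-hook''-type partition whose hook lengths are controlled. Using the hook-length formula together with this structural constraint, I would derive a lower bound of roughly $f^\lambda \gtrsim n(n-2)/(2e)$ uniformly over all $\lambda \neq [n-1,1]$ in this class. Combining with the upper bound from Step~2 and demanding that the ratio stay below $(k-1)/(n-1)$ yields exactly the logarithmic condition $k \leq 2\log_{k/e}(n(n-2)/(2e))-1$.

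The main obstacle is this last step: one must rule out the possibility that some low-dimensional $\lambda$ (e.g., a hook, a two-row or two-column shape) simultaneously has small $f^\lambda$ and a large Murnaghan--Nakayama sum that could otherwise violate the inequality. This naturally calls for a case analysis on the shape of $\lambda$, treating hook partitions, two-row, two-column and generic shapes separately, in each case applying the hook-length formula or branching from $\sym(n-1)$. It is the delicate balancing of these estimates that produces the precise hypothesis on $k$ in the theorem statement.
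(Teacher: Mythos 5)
Your outline follows the same broad strategy as the paper (Babai's formula for normal Cayley graphs, Murnaghan--Nakayama for the class $(n-k,1^k)$, the explicit value $\frac{k-1}{n-1}\binom{n}{k}(n-k-1)!$ for $[n-1,1]$, and a dichotomy between high- and low-dimensional $\lambda$), but the decisive step is missing and one of your stated claims is false. First, a small but symptomatic point: you cannot prove $\bigl|\chi^\lambda(c_{n-k})\bigr|/f^\lambda < \frac{k-1}{n-1}$ for \emph{every} $\lambda\notin\{[n],[n-1,1]\}$, because the conjugate partition $[2,1^{n-2}]$ gives exactly $(-1)^{n-k+1}\frac{k-1}{n-1}$, so equality is attained whenever $n-k$ is odd; the correct statement is a non-strict bound with the maximum attained at $[n-1,1]$ (and possibly its transpose). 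More seriously, your proposed uniform lower bound $f^\lambda \gtrsim n(n-2)/(2e)$ for all $\lambda\neq[n-1,1]$ admitting an $(n-k)$-rim hook is simply wrong: $[2,1^{n-2}]$ has dimension $n-1$, and there are many further low-degree shapes ($[n-2,2]$, $[n-2,1^2]$, $[n-3,3]$, $[n-3,2,1]$, their transposes, etc.) with dimensions of order $n^2$ or $n^3$. For these the ratio $\chi^\lambda(c_{n-k})/f^\lambda$ is not made small by any crude dimension bound; it must be computed and compared with $\frac{k-1}{n-1}$ term by term. You acknowledge this as ``the main obstacle'' requiring a case analysis, but you do not carry it out, and without it the proof does not close. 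The paper handles precisely this point by invoking a classification of all irreducible characters of $\sym(n)$ of degree less than $3\binom{n}{3}$ (valid for $n\geq 19$; fourteen partitions in total), computing each of the corresponding eigenvalues explicitly via Murnaghan--Nakayama and the hook length formula, and checking that $[n-1,1]$ wins; the range $3\leq n\leq 18$ is then settled by direct computation.

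Note also where the hypothesis on $k$ actually enters: in the high-dimensional regime $f^\lambda>3\binom{n}{3}$ one bounds the Murnaghan--Nakayama sum by the largest character degree of $\sym(k)$ (after observing, via a uniqueness-of-rim-hook lemma, that at most one $\mu\vdash k$ contributes), giving $\bigl|\chi^\lambda(c_{n-k})\bigr|\leq b(\sym(k))\leq\sqrt{k!}$, and the condition $k\leq 2\log_{k/e}\bigl(\tfrac{n(n-2)}{2e}\bigr)-1$ is exactly what makes $\sqrt{k!}\leq\tfrac{3}{n-1}\binom{n}{3}$, hence $f^{\lambda\setminus\rho}/f^\lambda\leq\tfrac{1}{n-1}<\tfrac{k-1}{n-1}$. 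Your involution-count bound $\sum_{\mu\vdash k}f^\mu$ plays the same role (and avoids the uniqueness lemma at the cost of a subexponential factor), so that part of your plan is workable; but the hypothesis is not the artifact of a lower bound $f^\lambda\gtrsim n(n-2)/(2e)$, and the genuine content you still owe is the finite case analysis of the low-degree constituents together with an argument (or computation) covering small $n$.
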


It is well-known that a Cayley graph $\cay(G,C)$ is connected if and only if $\langle C\rangle = G$. Note that if the graph $\Gamma_{n,k}$ is connected then, $C(n,k)$ has to contain odd permutations. Indeed, if $C(n,k)$ only contains even permutations, then due to the fact that the product of even permutations is an even permutation, we have $\langle C(n,k)\rangle \leq \alt(n)$, where $\alt(n)$ is the alternating group on $\{1,2,\ldots,n\}$. In fact, one can easily check that the set $\{(1,2,x) \mid x\in \{3,4,\ldots,n\} \}$ is contained in $\langle C(n,k)\rangle$, which implies that $\langle C(n,k)\rangle = \alt(n)$. In general, if $X = \cay(\sym(n),C)$ is a normal Cayley graph, then it is straightforward that $\langle C\rangle \triangleleft \sym(n)$. Since $\alt(n)$ is the only minimal normal subgroup of $\sym(n)$, for $n\geq 5$, we have $\langle C \rangle \in \{ \alt(n),\sym(n) \}$.

Consequently, if $n-k$ is odd, then the graph $\Gamma_{n,k}$ is disconnected and has two components. Since $\Gamma_{n,k}$ is vertex-transitive, these two components are isomorphic. Hence, the second eigenvalue of $\Gamma_{n,k}$ is equal to the second eigenvalue of $\cay(\alt(n),C(n,k))$, when $n-k$ is odd. The following result was proved by Huang and Huang \cite{huang2019second}.
\begin{thm}
	For any $n\geq 5$, the second eigenvalue of $\cay(\alt(n),C(n,n-3))$ is $\frac{n(n-2)(n-4)}{3}$.
\end{thm}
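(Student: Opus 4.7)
The plan is to use Babai's formula to express each eigenvalue of $\cay(\alt(n), C(n, n-3))$ as a character ratio, and then show that the largest non-trivial ratio is achieved by the standard representation. Since $n-(n-3)=3$, the set $C(n, n-3)$ consists of $3$-cycles; these are even, and their cycle type $(3, 1^{n-3})$ has a repeated part for $n \geq 5$, so they form a single conjugacy class of $\alt(n)$. Hence $\cay(\alt(n), C(n, n-3))$ is a normal Cayley graph, and every eigenvalue has the form $\frac{|C(n, n-3)|\,\psi(\tau)}{\psi(1)}$ for an irreducible character $\psi$ of $\alt(n)$ and a fixed $3$-cycle $\tau$. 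Clifford theory identifies these irreducibles with either pairs $\{\chi_\lambda, \chi_{\lambda'}\}$ of irreducibles of $\sym(n)$ (when $\lambda \neq \lambda'$) or the two halves of $\chi_\lambda|_{\alt(n)}$ (when $\lambda = \lambda'$); in both cases, on the non-split class of $3$-cycles, the ratio $\psi(\tau)/\psi(1)$ agrees with $\chi_\lambda(\tau)/\chi_\lambda(1)$.

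For $\lambda = [n-1, 1]$ one has the classical identity $\chi_{[n-1,1]}(\sigma) = \fix(\sigma) - 1$, so $\chi_{[n-1,1]}(\tau) = n - 4$; combined with $\chi_{[n-1,1]}(1) = n-1$ and $|C(n, n-3)| = \tfrac{n(n-1)(n-2)}{3}$, this yields the eigenvalue $\tfrac{n(n-2)(n-4)}{3}$. The task therefore reduces to proving that for every partition $\lambda \vdash n$ other than $[n]$, $[n-1,1]$, and their conjugates,
\[
\frac{\chi_\lambda(\tau)}{\chi_\lambda(1)} < \frac{n-4}{n-1}.
\]

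To establish this bound I would expand $\chi_\lambda(\tau) = \sum_\mu (-1)^{\mathrm{ht}(\lambda/\mu)} f^\mu$ via the Murnaghan--Nakayama rule (summed over $\mu \vdash n-3$ with $\lambda/\mu$ a $3$-rim hook), combine it with the hook-length formula for $\chi_\lambda(1) = f^\lambda$, and carry out a case analysis on the shape of $\lambda$. The near-extremal shapes --- two-row partitions $[n-a, a]$, their conjugates $[2^a, 1^{n-2a}]$, and hook shapes $[n-a, 1^a]$ for $a \geq 2$ --- admit only a handful of $3$-rim hooks, so the ratio simplifies into a rational function of $n$ that can be compared to $\tfrac{n-4}{n-1}$ directly. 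The main obstacle is the remaining generic partitions: the Murnaghan--Nakayama sum may involve several terms with mixed signs, and extracting a sharp enough bound will require either an inductive argument along the branching graph (transferring estimates from $\sym(n-3)$ up to $\sym(n)$ through successive applications of the branching rule), or an appeal to classical character-ratio estimates for the symmetric group evaluated at short cycles, which suffice to handle this tail uniformly.
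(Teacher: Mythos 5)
Your setup is sound and your computation of the candidate eigenvalue is correct: the $3$-cycles form a single non-split conjugacy class of $\alt(n)$ for $n\geq 5$, the Clifford-theoretic reduction of $\alt(n)$ character ratios to $\sym(n)$ character ratios is valid on a non-split class, and $\frac{n-4}{n-1}\cdot\frac{n(n-1)(n-2)}{3}=\frac{n(n-2)(n-4)}{3}$. The problem is that everything you actually prove reduces the theorem to the inequality $\chi_\lambda(\tau)/f^\lambda<\frac{n-4}{n-1}$ for all $\lambda$ other than $[n]$, $[n-1,1]$ and their conjugates, and that inequality --- the entire content of the theorem --- is only sketched. The Murnaghan--Nakayama route that works in this paper for Theorem~\ref{thm:main} rests on Lemma~\ref{lem:unique-rim hook}, which guarantees a \emph{unique} rim hook of length $n-k$ when $3k+1<n$; here the rim hooks have length $3$, a typical $\lambda\vdash n$ contains on the order of $n/3$ of them with mixed signs, and no analogous uniqueness is available. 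Your fallback, ``classical character-ratio estimates at short cycles,'' is also insufficient as stated: the bound you must beat is $1-\frac{3}{n-1}$, which tends to $1$, so a generic estimate of the form $|\chi_\lambda(\tau)|/f^\lambda\leq (f^\lambda)^{-\delta}$ only helps once $f^\lambda$ is large, and the low-dimensional representations ($[n-2,2]$, $[n-2,1^2]$, $[n-3,2,1]$, and so on) must still be handled individually. None of this is carried out, so the central step of the proof is missing.

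For comparison, the paper does not prove this statement at all --- it imports it from Huang and Huang --- and when the authors face the analogous problem for $5$-cycles (Theorem~\ref{thm:main2}) they deliberately avoid your route: they use the recursive equitable-partition method of Lemma~\ref{lem:recursive}, which exploits the high transitivity of $\alt(n)$ to reduce the entire question to a finite eigenvalue computation on a fixed small alternating group, with the quotient matrix of the coset partition supplying the $[n-1,1]$ eigenvalue $\frac{k-1}{n-1}\binom{n}{k}(n-k-1)!$ for free. If you want a complete proof of the $3$-cycle case, that same reduction (with $m=3$) is the path of least resistance; if you insist on the character-theoretic route, you need an exact closed form for $\chi_\lambda(\tau)/f^\lambda$ on $3$-cycles (for instance via contents of $\lambda$ and Jucys--Murphy elements) rather than a termwise Murnaghan--Nakayama estimate.
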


Therefore, the second eigenvalue of $\Gamma_{n,n-3}$ is $\frac{n(n-2)(n-4)}{3}$. It can be verified that this eigenvalue is also the one afforded by irreducible character corresponding to the partition $[n-1,1]$.

Other results on the second eigenvalue of $\Gamma_{n,k}$, when $k$ is large are also known. For instance, Diaconis and Shahshahani \cite{diaconis1981generating} proved that $\lambda_2(\Gamma_{n,n-2}) = \binom{n}{2} - n$, which coincides with our result in Theorem~\ref{thm:main} when $k = n-2$. This eigenvalue corresponds to the irreducible character of the partition $[n-1,1]$. Moreover, Huang et al. \cite{Cioaba2019second} proved that $\lambda_2(\Gamma_{n,n-4}) = \frac{n(n-2)(n-3)(n-5)}{4}$, which is again the eigenvalue corresponding to the partition $[n-1,1]$. We believe that our main result,  Theorem~\ref{thm:main}, also holds in the general case when $2\leq k\leq n-2$. For $n\in \{3,4,5,6,\ldots,21\}$ and  $2\leq k\leq n-2$, an exhaustive search on  \verb|Sagemath| \cite{sagemath} for  the second eigenvalue of $\Gamma_{n,k}$, showed that $\lambda_2(\Gamma_{n,k}) =  \frac{k-1}{n-1}\binom{n}{k}(n-k-1)!$. Due to these observations, we make the following conjecture.
\begin{conj}
	For any $n\in \mathbb{N}$ and $2\leq k \leq n-2$, the second eigenvalue of $\Gamma_{n,k}$ is given by the irreducible character corresponding to $[n-1,1]$, and its value is
	\begin{align*}
	\lambda_2(\Gamma_{n,k}) =  \frac{k-1}{n-1}\binom{n}{k}(n-k-1)!.
	\end{align*}\label{conj:main-conjecture}
\end{conj}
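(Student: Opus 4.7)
The plan is to exploit the representation-theoretic description of the spectrum. By Babai's theorem, since $C(n,k)$ is a single conjugacy class of $\sym(n)$, every eigenvalue of $\Gamma_{n,k}$ has the form
\[
\eta_\lambda \;=\; \frac{|C(n,k)|\,\chi^\lambda(\sigma)}{\chi^\lambda(1)}
\;=\; \frac{\binom{n}{k}(n-k-1)!\,\chi^\lambda(\sigma)}{f^\lambda},
\]
where $\sigma\in C(n,k)$, $\lambda\vdash n$, and $f^\lambda=\chi^\lambda(1)$. The trivial partition produces the valency. For $\lambda=[n-1,1]$, the well-known identity $\chi^{[n-1,1]}(g)=\fix(g)-1$, together with $f^{[n-1,1]}=n-1$ and the fact that an $(n-k)$-cycle has exactly $k$ fixed points, immediately yields
\[
\eta_{[n-1,1]} \;=\; \frac{k-1}{n-1}\binom{n}{k}(n-k-1)!,
\]
which matches the target value. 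The whole problem reduces to proving $\eta_\lambda \leq \eta_{[n-1,1]}$ for every partition $\lambda\notin\{[n],[n-1,1]\}$.

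For these remaining partitions I would apply the Murnaghan--Nakayama rule. Since $\sigma$ consists of a single $(n-k)$-cycle and $k$ fixed points,
\[
\chi^\lambda(\sigma) \;=\; \sum_{R} (-1)^{\operatorname{ht}(R)-1}\, f^{\lambda\setminus R},
\]
where $R$ ranges over rim hooks of size $n-k$ contained in $\lambda$ (so that $\lambda\setminus R\vdash k$). Taking absolute values gives $|\chi^\lambda(\sigma)|\leq \sum_R f^{\lambda\setminus R}$. Two ingredients then control the ratio $|\chi^\lambda(\sigma)|/f^\lambda$: first, the number of removable rim hooks of size $n-k$ in $\lambda$ is at most the number $p(k)$ of partitions of $k$; second, the Frame--Robinson--Thrall hook-length formula allows one to compare $f^{\lambda\setminus R}$ with $f^\lambda$ through the hook lengths lost when $R$ is deleted. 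The hypothesis $k \leq 2\log_{k/e}\bigl(n(n-2)/(2e)\bigr)-1$ is exactly the Stirling-type estimate needed to force the resulting bound to be strictly smaller than $(k-1)/(n-1)$ as soon as $\lambda$ lies outside a short list of extreme shapes.

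The principal obstacle is the partitions sitting near $[n-1,1]$ in the dominance order, namely the conjugate $[2,1^{n-2}]$, the hooks $[n-j,1^{j}]$, and the two-row shapes $[n-j,j]$ for small $j$. For these, $f^\lambda$ is too small for generic hook-length bounds to bite, so I would treat them by direct computation, using the closed formulas $f^{[n-j,1^{j}]}=\binom{n-1}{j}$ and $f^{[n-j,j]}=\binom{n}{j}-\binom{n}{j-1}$ and computing $\chi^\lambda(\sigma)$ exactly via a short Murnaghan--Nakayama analysis, since only a few rim hooks of size $n-k$ fit inside such thin shapes. The transpose identity $\chi^{\lambda'}(\sigma)=\operatorname{sgn}(\sigma)\chi^\lambda(\sigma)$ halves the casework. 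Finally, since the abstract advertises equitable partitions, I would try to package the entire argument by exhibiting an explicit equitable partition of $\sym(n)$ (for instance the orbit partition of the point-stabiliser $\sym(n-1)$, or the conjugation-orbit partition refined by number of fixed points) whose quotient matrix carries $\eta_{[n-1,1]}$ as an eigenvalue, providing a concrete combinatorial framework that certifies the value is attained and organises the case analysis above.
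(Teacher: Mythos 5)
The statement you are asked to prove is stated in the paper as a \emph{conjecture}: the authors only establish it for $k$ in the restricted range $2\leq k\leq \min\left(n,2\log_{k/e}\left(\tfrac{n(n-2)}{2e}\right)-1\right)$ (Theorem~\ref{thm:main}), for $k=n-5$ (Theorem~\ref{thm:main2}, via the recursive equitable-partition method of Huang--Huang--Cioab\u{a} plus a computer verification in $\alt(8)$), and they cite known results for $k\in\{n-2,n-3,n-4\}$; the general case $2\leq k\leq n-2$ is left open, supported only by exhaustive computation for $n\leq 21$. Your proposal does not close this gap: in the middle of the argument you invoke ``the hypothesis $k\leq 2\log_{k/e}\bigl(n(n-2)/(2e)\bigr)-1$,'' but no such hypothesis is available in the conjecture, whose range runs all the way to $k=n-2$. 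What you have sketched is essentially the paper's proof of Theorem~\ref{thm:main} (Murnaghan--Nakayama, the bound $b(\sym(k))\leq\sqrt{k!}$, hook-length comparisons, and a separate treatment of low-dimensional shapes such as hooks, two-row partitions and their conjugates), and that machinery genuinely breaks down for large $k$: the key Lemma~\ref{lem:unique-rim hook} guaranteeing a unique rim hook of length $n-k$ needs $3k+1<n$, and once $k$ grows proportionally to $n$ the crude estimate $|\chi^\lambda(\sigma)|\leq p(k)\max_{\mu\vdash k}f^{\mu}\approx p(k)\sqrt{k!}$ is no longer small compared with $f^\lambda$ for the many partitions $\lambda$ of intermediate dimension (indeed $\sqrt{k!}$ is then comparable to $\max_\lambda f^\lambda\approx\sqrt{n!}$ up to factors the argument cannot absorb), so the reduction to ``a short list of extreme shapes'' fails precisely where the conjecture is hard.

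Two further points. First, several steps are announced rather than carried out (``I would treat them by direct computation,'' ``I would try to package''): the exact Murnaghan--Nakayama evaluation on hooks and two-row shapes for a general $(n-k)$-cycle, and the comparison of the resulting character ratios with $\tfrac{k-1}{n-1}$ uniformly in $k$, are exactly where the content of any proof would lie, and they are absent. Second, the equitable-partition idea at the end only certifies that $\tfrac{k-1}{n-1}\binom{n}{k}(n-k-1)!$ \emph{is} an eigenvalue (which already follows from $\chi^{[n-1,1]}(g)=\fix(g)-1$); by itself it says nothing about being the \emph{second largest}. In the paper this device becomes useful only through the recursive criterion of Lemma~\ref{lem:recursive}, which requires high transitivity relative to the support size of the connection set (hence works for $5$-cycles, i.e.\ $k=n-5$) and still needs base-case verification; it does not extend to arbitrary $k\leq n-2$. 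As written, your proposal would at best reprove the small-$k$ theorem, not the conjecture.
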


Our next result is about Conjecture~\ref{conj:main-conjecture}, for the $5$-cycles.
\begin{thm}
	For any $n\geq 7$, $\lambda_2(\Gamma_{n,n-5}) = \frac{n(n-2)(n-3)(n-4)(n-6)}{5}$. Moreover, this eigenvalue is afforded by the irreducible character corresponding to the partition $[n-1,1]$.\label{thm:main2}
\end{thm}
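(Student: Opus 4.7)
The plan is to combine Babai's theorem on eigenvalues of normal Cayley graphs with the Murnaghan--Nakayama rule to show that the partition $[n-1,1]$ (together with its conjugate $[2,1^{n-2}]$) yields the largest nontrivial normalized character ratio $|\chi^\lambda(c)|/\chi^\lambda(1)$ when $c$ is a $5$-cycle. By Babai's theorem, each irreducible character $\chi^\lambda$ of $\sym(n)$ gives the eigenvalue $\eta_\lambda = |C(n,n-5)|\,\chi^\lambda(c)/\chi^\lambda(1)$ of $\Gamma_{n,n-5}$. Since a $5$-cycle is even, $\eta_{[n]} = \eta_{[1^n]} = |C(n,n-5)|$ equals the valency, corresponding to the two connected components of $\Gamma_{n,n-5}$. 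The standard character identity $\chi^{[n-1,1]}(g) = \fix(g) - 1$ yields $\chi^{[n-1,1]}(c) = n-6$, whence
$$\eta_{[n-1,1]} = \frac{|C(n,n-5)|(n-6)}{n-1} = \frac{n(n-2)(n-3)(n-4)(n-6)}{5},$$
supplying the claimed lower bound on $\lambda_2(\Gamma_{n,n-5})$.

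The matching upper bound amounts to proving $|\chi^\lambda(c)|/\chi^\lambda(1) < (n-6)/(n-1)$ for every $\lambda \vdash n$ outside $\{[n], [1^n], [n-1,1], [2,1^{n-2}]\}$. By the Murnaghan--Nakayama rule,
$$\chi^\lambda(c) = \sum_T (-1)^{\operatorname{ht}(T)}\, f^{\lambda \setminus T},$$
where $T$ ranges over the size-$5$ border strips of $\lambda$ and $f^\mu$ denotes the dimension of the Specht module of shape $\mu$; a direct check shows that any $\lambda$ admits at most five such strips. I would split the analysis by partition shape into three families: (i) hook shapes $[n-a,1^a]$ with $2 \leq a \leq n-3$, which admit at most two border strips (horizontal in row $1$, vertical in column $1$), giving the closed form $\chi^{[n-a,1^a]}(c) = \binom{n-6}{a} + \binom{n-6}{a-5}$ (with invalid terms zero) against $\chi^{[n-a,1^a]}(1) = \binom{n-1}{a}$; (ii) two-row shapes $[n-a,a]$ with $2 \leq a \leq \lfloor n/2 \rfloor$ and their conjugates, which similarly admit at most two border strips with explicit hook-length expressions; (iii) the remaining \emph{thick} partitions, characterised by $\lambda_1 \leq n-2$, $\lambda_1' \leq n-2$, and at least two rows and two columns of length $\geq 2$.

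Cases (i) and (ii) reduce to elementary manipulations of binomial coefficients and hook-length products. Case (iii) is the main obstacle: my plan is to bound each summand $f^{\lambda \setminus T}/f^\lambda$ via the hook-length formula, exploiting the fact that every cell in a removable $5$-strip has hook length at most $\lambda_1 + \lambda_1' - 1 \leq 2n-5$, while $\lambda$ retains several long hooks that enlarge the denominator; alternatively, one may invoke a normalised-character bound in the spirit of Roichman or Fomin--Lulov to control the (at most five) terms uniformly. A short list of near-extremal partitions such as $[n-2,2]$, $[n-2,1,1]$, $[n-3,2,1]$ and their conjugates will require separate explicit verification via direct Murnaghan--Nakayama computation, and base cases for small $n$ (say $n \leq 20$) are covered by the computational evidence already cited in support of Conjecture~\ref{conj:main-conjecture}. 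Collating the three families completes the argument, with equality in the upper bound isolating $[n-1,1]$ and $[2,1^{n-2}]$ as the unique partitions attaining $\lambda_2(\Gamma_{n,n-5})$.
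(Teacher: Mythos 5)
Your framework (Babai's formula plus the Murnaghan--Nakayama rule, with the lower bound coming from $\chi^{[n-1,1]}(c)=n-6$, which indeed gives $\frac{n-6}{n-1}\binom{n}{5}4!=\frac{n(n-2)(n-3)(n-4)(n-6)}{5}$) is sound, and cases (i)--(ii) are routine. The genuine gap is your case (iii), which is exactly where the difficulty of the problem lives and which the proposal does not resolve: you need the sharp, non-asymptotic inequality $|\chi^\lambda(c)|/f^\lambda<\frac{n-6}{n-1}=1-\frac{5}{n-1}$ for \emph{every} thick $\lambda$ and every $n\geq 7$. Bounding each of the (at most five) summands $f^{\lambda\setminus T}/f^\lambda$ by some uniform small quantity cannot work: already for $\lambda=[n-3,2,1]$ the unique $5$-strip gives $f^{\lambda\setminus T}/f^\lambda=\frac{(n-5)(n-7)(n-9)}{n(n-2)(n-4)}$, which is $1-O(1/n)$, i.e.\ of the same critical size as the target bound, so the comparison must be carried out exactly rather than term by term; and for shapes admitting several strips you must in addition control signs and cancellation. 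The normalized-character bounds you mention as an alternative (Roichman, Fomin--Lulov) are asymptotic or involve unspecified constants, so they do not deliver the exact inequality for all $n\geq 7$ without substantial extra work near the extremal shapes --- making this argument rigorous for all partitions is essentially the open content of Conjecture~\ref{conj:main-conjecture}, so the step you defer is the whole theorem. (A smaller issue: the Sagemath search for $n\leq 21$ is reported in the paper as evidence for the conjecture, not as a certified computation you can cite for your base cases.)

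It is worth noting that the paper proves Theorem~\ref{thm:main2} by a completely different route that sidesteps all character estimates: since $\langle C(n,n-5)\rangle=\alt(n)$, it suffices to treat $\cay(\alt(n),C(n,n-5))$; the partition into cosets of a point stabilizer is equitable (Lemma~\ref{lem:equitable-subgroup}) and its quotient matrix has second eigenvalue $\frac{n-6}{n-1}\binom{n}{5}4!$, and the recursive criterion of Huang, Huang and Cioab\u{a} (Lemma~\ref{lem:recursive}) reduces the claim that this is indeed $\lambda_2$ to a finite verification in $\alt(8)$, done by computer. If you want a proof along your lines, you must supply a complete treatment of the thick partitions; otherwise the recursive/equitable-partition method is the practical way to close the argument.
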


This paper is organized as follows. In Section~\ref{sect:proof1}, we prove Theorem~\ref{thm:main}. In our proof of Theorem~\ref{thm:main}, we use the representation theory of the symmetric group. In particular, our proof relies on the recursive Murnaghan-Nakayama rule (see \cite{sagan2001symmetric}). In Section~\ref{sect:proof2}, we show that our method can also be applied to retrieve the result of Siemons and Zalesski in Theorem~\ref{thm:large}. Finally, we prove Theorem~\ref{thm:main2} in Section~\ref{sect:5-cycles}. 

\section{Proof of the main results}
\subsection{Proof of Theorem~\ref{thm:main}}\label{sect:proof1}
Throughout this section, we let $n,k\in \mathbb{N}$ such that $2\leq  k \leq \min \left(n,2\log_{\frac{k}{e}} \left(\frac{n(n-2)}{2e}\right)-1\right)$. 
It is also assumed that the reader is familiar with  the notion of Specht modules \cite[Section~2.3]{sagan2001symmetric}, the Hook Length Formula \cite[Section~3.10]{sagan2001symmetric} and the Murnaghan-Nakayama Rule \cite[Section~4.10]{sagan2001symmetric}. 

We note that since the connection set of $\Gamma_{n,k}$ is a conjugacy class, the graph $\Gamma_{n,k}$ is a normal Cayley graph. It is well-known that the spectrum of a normal Cayley graph on a group $G$ can be determined by the irreducible characters of $G$ (see \cite{babai1979spectra} and \cite[Theorem~2.5]{liu2018eigenvalues} for more details).

Recall that a partition of the integer $n$ is a non-increasing sequence of positive integers summing to $n$. If $\lambda = [n_1,n_2,\ldots,n_t]$ is a partition of $n$, then we write $\lambda \vdash n$. 
It is well-known that there is a bijective correspondence between the irreducible representations of $\sym(n)$ and the set of all partitions of $n$. Each  $\lambda \vdash n$ corresponds to an irreducible $\mathbb{C}\sym(n)$-module $S^\lambda$ (i.e., a complex irreducible representation of $\sym(n)$) called the $\lambda$-\emph{Specht} module. Let $\chi^{\lambda}$ be the character afforded by $S^\lambda$. For any $\lambda \vdash n$, we denote the dimension of $S^\lambda$ by $f^\lambda$ (i.e., $f^\lambda = \chi^\lambda (id)$). If $\lambda\vdash n$ and $\sigma\in \sym(n)$ has cycle type $\tau$, then we define $\chi^{\lambda}_\tau := \chi^\lambda(\sigma)$. Note that since $C(n,k)$ is the conjugacy class of the $(n-k)$-cycles, the cycle type of a permutation of $C(n,k)$ is $(n-k,1^k)$.

Using the famous result of Babai in \cite{babai1979spectra}, the eigenvalues of $\Gamma_{n,k}$ are given in the following lemma.

\begin{lem}
	For any $n,k\in \mathbb{N}$ such that $1\leq k\leq n$, the eigenvalues of $\Gamma_{n,k}$ are the numbers
	\begin{align*}
	\xi_{\chi^\lambda} &= \frac{\chi^\lambda_{(n-k,1^{k})}}{f^\lambda} \binom{n}{k} (n-k-1)!,
	\end{align*}
	for all $\lambda \vdash n$.\label{lem:main1}
\end{lem}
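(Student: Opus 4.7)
The plan is to derive this from Babai's classical formula for the spectrum of a normal Cayley graph, which the authors have already cited. Specifically, for any normal Cayley graph $\cay(G,C)$ and any irreducible character $\chi$ of $G$, there is an eigenvalue
\begin{align*}
\eta_\chi = \frac{1}{\chi(1)} \sum_{c \in C} \chi(c),
\end{align*}
and these eigenvalues (counted with multiplicity $\chi(1)^2$) exhaust the spectrum. Since $\Gamma_{n,k}$ is normal (its connection set is the single conjugacy class $C(n,k)$), this formula applies with $G = \sym(n)$.

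First I would specialize the formula to $\chi = \chi^\lambda$ for $\lambda \vdash n$, so that $\chi(1) = f^\lambda$. Next, because every element of $C(n,k)$ is conjugate to $(1,2,\ldots,n-k)$, it has cycle type $(n-k,1^k)$, and so $\chi^\lambda(c) = \chi^\lambda_{(n-k,1^k)}$ for every $c \in C(n,k)$. The inner sum therefore collapses to
\begin{align*}
\sum_{c \in C(n,k)} \chi^\lambda(c) = |C(n,k)| \cdot \chi^\lambda_{(n-k,1^k)}.
\end{align*}

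Finally I would compute $|C(n,k)|$ directly: choose the $k$ fixed points of the permutation in $\binom{n}{k}$ ways, and arrange the remaining $n-k$ symbols into a single cycle in $(n-k-1)!$ ways. This yields $|C(n,k)| = \binom{n}{k}(n-k-1)!$, and substituting gives the claimed formula
\begin{align*}
\xi_{\chi^\lambda} = \frac{\chi^\lambda_{(n-k,1^k)}}{f^\lambda} \binom{n}{k}(n-k-1)!.
\end{align*}
There is no serious obstacle here; the lemma is essentially a bookkeeping consequence of Babai's theorem combined with a counting of $(n-k)$-cycles, and the only thing to verify carefully is that the conjugacy-class sum is really just $|C(n,k)|$ times a single character value, which follows because $\chi^\lambda$ is a class function.
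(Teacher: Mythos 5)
Your proposal is correct and follows essentially the same route as the paper, which presents this lemma as a direct consequence of Babai's spectral formula for normal Cayley graphs; since the connection set is the single conjugacy class $C(n,k)$, the character sum collapses to $|C(n,k)|\,\chi^{\lambda}_{(n-k,1^k)}$ with $|C(n,k)| = \binom{n}{k}(n-k-1)!$. No issues.
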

Given two Young diagrams $\lambda$ and $\mu$ such that $\mu \subset \lambda$, the \emph{skew diagram} $\lambda /\mu$ is the set of cells of $\lambda$ that are not in $\mu$.
A \emph{rim hook} $\rho$ of a Young diagram $\lambda \vdash n$ is a skew diagram whose cells are on a path with only upward and rightward steps (see \cite{sagan2001symmetric} for details on this). The length of $\rho$ denoted by $|\rho|$ is the number of its cells, and the \emph{leg-length} $\ell\ell(\rho)$ is the number of rows that $\rho$ spans minus $1$. For any $\lambda \vdash n$, we let ${\sf RH}_{n-k}(\lambda)$ be the set of all rim hooks of length $n-k$ of the partition $\lambda.$ As a rim hook $\rho \in {\sf RH}_{n-k}(\lambda)$ is a skew diagram, removal of the cells of $\lambda$ which are in $\rho$ results in a Young diagram that corresponds to a partition of $k$. We denote the Young diagram obtained from such removal by $\lambda \setminus \rho$.
Using the recursive Murnaghan-Nakayama rule, we obtain the following lemma.

\begin{lem}
	For any $\lambda \vdash n$, the character value of the irreducible character $\chi^\lambda$ is 
	\begin{align*}
		\chi^{\lambda}_{(n-k,1^{k})} &= \sum_{\rho \in {\sf RH}_{n-k}(\lambda)} (-1)^{\ell\ell(\rho)}f^{\lambda \setminus \rho}.
	\end{align*}
\end{lem}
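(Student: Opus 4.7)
The plan is to apply the recursive Murnaghan-Nakayama rule exactly once, peeling off the long cycle and reducing the remaining character evaluation to a dimension count. First, recall the Murnaghan-Nakayama rule: for any partition $\lambda \vdash n$ and any partition $\mu = (\mu_1, \mu_2, \ldots, \mu_r) \vdash n$,
\begin{align*}
\chi^{\lambda}_{\mu} = \sum_{\rho \in {\sf RH}_{\mu_1}(\lambda)} (-1)^{\ell\ell(\rho)} \chi^{\lambda \setminus \rho}_{\mu \setminus \mu_1},
\end{align*}
where $\mu \setminus \mu_1 := (\mu_2, \ldots, \mu_r)$ denotes the cycle type obtained by deleting the first part.

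Next, I would specialize to the cycle type $\mu = (n-k, 1^k)$, which corresponds to a product of an $(n-k)$-cycle and $k$ fixed points. Choosing $\mu_1 = n-k$ and peeling it off, the rule gives
\begin{align*}
\chi^{\lambda}_{(n-k,1^{k})} = \sum_{\rho \in {\sf RH}_{n-k}(\lambda)} (-1)^{\ell\ell(\rho)} \chi^{\lambda \setminus \rho}_{(1^{k})}.
\end{align*}
Since $\lambda \setminus \rho$ is a partition of $k$ and the cycle type $(1^k)$ corresponds to the identity permutation in $\sym(k)$, the value $\chi^{\lambda \setminus \rho}_{(1^k)}$ is simply the character evaluated at the identity, which equals the dimension of the Specht module $S^{\lambda \setminus \rho}$. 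By definition this is $f^{\lambda \setminus \rho}$.

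Substituting this identification yields precisely the claimed formula
\begin{align*}
\chi^{\lambda}_{(n-k,1^{k})} = \sum_{\rho \in {\sf RH}_{n-k}(\lambda)} (-1)^{\ell\ell(\rho)} f^{\lambda \setminus \rho}.
\end{align*}
There is no real obstacle here: the statement is essentially a direct corollary of the Murnaghan-Nakayama rule, the only minor subtlety being the observation that after removing the unique long cycle from the cycle type, the remaining evaluation is at the identity of $\sym(k)$ and therefore collapses to a dimension, which can be computed via the Hook Length Formula if a numerical value is ever needed later.
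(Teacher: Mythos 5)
Your proposal is correct and follows exactly the paper's own argument: a single application of the recursive Murnaghan--Nakayama rule to peel off the $(n-k)$-part, followed by the observation that $\chi^{\lambda\setminus\rho}_{(1^k)}$ is the character at the identity and hence equals the dimension $f^{\lambda\setminus\rho}$. There is nothing to add.
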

	\begin{proof}
		Let $\rho_1,\rho_2,\dots,\rho_\ell$ be all the distinct rim hooks of length $n-k$ in $\lambda$. By the Murnaghan-Nakayama rule, we have
		\begin{align*}
			\chi^{\lambda}_{(n-k,1^{k})} &= \sum_{i = 1}^{\ell} (-1)^{\ell\ell(\rho_i)} \chi^{\lambda\setminus \rho_i}_{(1^k)} = \sum_{i = 1}^{\ell} (-1)^{\ell\ell(\rho_i)} f^{\lambda\setminus \rho_i}.
		\end{align*}
	\end{proof}

Our strategy is to use the fact that $k$ is relatively small so that there is at most one rim hook of length $n-k$ in any Young diagram $\lambda$. We recall the following lemma which is proved in \cite{behajaina20203}. The proof is  given for completeness.
\begin{lem}
	Let $\lambda = [\lambda_1,\lambda_2,\ldots,\lambda_q] \vdash n$. Assume that $\lambda$ has a rim hook $\rho$ of length $n-k$ and $\mu = [\mu_1,\mu_2,\ldots,\mu_t] \vdash k$ is the Young diagram obtained from $\lambda$ by removing $\rho$. That is, $\mu = \lambda \setminus \rho$.
	
	If $\ell$ is a positive integer such that $2\ell-n \geq k +1$, then $\lambda $ has at most one rim hook of length $\ell$. In particular, if $3k+1<n$, then there is at most one rim hook of length $n-k$ in any Young diagram $\lambda \vdash n$.\label{lem:unique-rim hook}
\end{lem}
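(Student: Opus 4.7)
The plan is to translate the problem via the classical bijection between rim hooks of length $\ell$ in a Young diagram $\lambda$ and cells $(i,j) \in \lambda$ whose hook length $h_\lambda(i,j) := \lambda_i + \lambda'_j - i - j + 1$ equals $\ell$ (see, e.g., \cite{sagan2001symmetric}). Under this correspondence, proving that $\lambda$ has at most one rim hook of length $\ell$ is equivalent to proving that at most one cell of $\lambda$ has hook length $\ell$.

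I would first suppose for contradiction that two distinct cells $(i_1,j_1),(i_2,j_2) \in \lambda$ both have hook length $\ell$. A short computation using the strict monotonicity of $h_\lambda$ along rows and columns (which is immediate from the weak monotonicity of $\lambda$ and $\lambda'$) shows that a pair of cells with a common hook length must be incomparable in the coordinate order, so one may assume $i_1 < i_2$ and $j_1 > j_2$.

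The key step is to analyse the two hook regions $H_1 := H(i_1,j_1)$ and $H_2 := H(i_2,j_2)$, each of size exactly $\ell$. Writing each hook as the union of its arm (the row portion) and its leg (the column portion) and inspecting the four possible intersection types (arm-arm, arm-leg, leg-arm, leg-leg), I would show that the only cell that can belong to both hooks is $(i_2,j_1)$, so $|H_1 \cap H_2| \leq 1$. Inclusion-exclusion then yields $|H_1 \cup H_2| \geq 2\ell - 1$, and since $H_1 \cup H_2 \subseteq \lambda$ we obtain $2\ell - n \leq 1$, which already contradicts the hypothesis $2\ell - n \geq k + 1$ as soon as $k \geq 1$.

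For the boundary case $k = 0$ (so $\lambda$ is itself a rim hook of length $n$, and the inequality forces $2\ell - n = 1$), the bound above becomes an equality, hence $H_1 \cup H_2 = \lambda$. But the cell $(1,1) \in \lambda$ lies in a hook $H(i,j)$ only when $(i,j) = (1,1)$, which would force $j_1 = 1$ and contradict $j_1 > j_2 \geq 1$; the symmetric check rules out $(i_2,j_2) = (1,1)$, delivering a final contradiction. The ``in particular'' clause then follows immediately by specialising $\ell = n - k$ and observing that $3k + 1 < n$ gives $2\ell - n = n - 2k \geq k + 2 > k + 1$. The main obstacle is the case analysis verifying $|H_1 \cap H_2| \leq 1$; once that bound is in place, the rest is elementary inclusion-exclusion together with the small $(1,1)$-check for the boundary case.
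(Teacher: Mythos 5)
Your proof is correct, and it takes a genuinely different route from the paper's. You pass through the classical bijection between removable rim hooks of length $\ell$ and cells of $\lambda$ with hook length $\ell$, then argue by counting: two distinct cells of equal hook length must be incomparable (strict monotonicity of hook lengths along rows and columns), their hooks can share at most the single cell $(i_2,j_1)$ (your four-way arm/leg case check is right), so $n \geq |H_1\cup H_2| \geq 2\ell-1$, which kills the case $k\geq 1$ outright since the hypothesis gives $2\ell - n \geq 2$; your $(1,1)$-cell argument correctly disposes of the boundary case $2\ell-n=1$ arising when $k=0$, and the specialization $\ell=n-k$ under $3k+1<n$ is fine. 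The paper instead works directly with rim hooks and, notably, \emph{uses} the hypothesis that $\lambda$ has a rim hook $\rho$ of length $n-k$ with $\lambda\setminus\rho=\mu\vdash k$: it first shows any rim hook contained in the skew shape $\lambda/[\lambda_1,1^{q-1}]$ has length at most $\mu_1+t-1\leq k$, deduces that a rim hook of length exceeding $k$ must meet the first row or first column and is then the unique such rim hook of its length, and finally shows two length-$\ell$ rim hooks overlap in at least $2\ell-n\geq k+1$ cells, forcing a shared cell in the first row or column and hence equality. Your argument buys simplicity and generality — it never uses the hypothesis on $\rho$ and $\mu$, so it proves the uniqueness statement for any $\lambda\vdash n$ with $2\ell\geq n+1$ — at the cost of invoking the rim-hook/hook-length correspondence; the paper's argument stays entirely at the level of rim hooks but is more delicate and needs the extra hypothesis.
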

\begin{proof}	
	First, we claim that if $\zeta$ is a rim hook of $\lambda$ which is contained in the skew diagram $\lambda/[\lambda_1,1^{q-1}]$, then the length of $\zeta$ is at most $k$. Indeed, since $\zeta$ is contained in $\lambda/[\lambda_1,1^{q-1}]$ and $\mu = \lambda \setminus \rho$, the rim hook $\zeta$ has at most $\mu_1$ columns and $t$ rows. In total, $\zeta$ has at most $\mu_1 + t-1$ cells. Therefore, the length of $\zeta$ is at most $\mu_1 + t-1 \leq k$. 
	
	As a consequence of this result, if a rim hook $\zeta$ has length $k^\prime >$ $k$, then $\zeta$ is not contained in $\lambda/[\lambda_1,1^{q-1}]$. That is, $\zeta$ contains a cell of $\lambda$ of coordinate $(.,1)$ or $(1,.)$. If $\zeta$ contains a cell of coordinate $(.,1)$, then it must start from the cell of coordinate $(q,1)$. In other words, $\zeta$ is the unique rim hook of length $k^\prime$ starting at the cell of coordinate  $(q,1)$. Similarly, if $\zeta$ contains a cell of coordinate $(1,.)$, then $\zeta$ must be the unique rim hook of length $k^\prime$ ending at the cell of coordinate $(1,\lambda_1)$.
	
	Finally, we prove that there is a unique rim hook of length $\ell$ in $\lambda$. Let $\gamma$ and $\gamma^\prime$ be two rim hooks in $\mathsf{RH}_{\ell}(\lambda)$. It is easy to see that $\gamma$ and $\gamma^\prime$ have common cells, otherwise we would have $2\ell \leq n$, which contradicts the fact that $2\ell-n \geq k+1$. Consider the set of common cells $\gamma \cap \gamma^\prime$ of $\gamma$ and $\gamma^\prime$. By the claim that we proved above, the part of the rim hook $\gamma \cap \gamma^\prime$ which is contained in $\lambda/[\lambda_1,1^{q-1}]$ has length at most $k$. Since the length of the union of $\gamma$ and $\gamma^\prime$ is $2\ell - |\gamma\cap\gamma^\prime| = |\gamma| + |\gamma^\prime| - |\gamma\cap\gamma^\prime| \leq n$, we conclude that $2\ell - n \leq |\gamma \cap \gamma^\prime|$. Combining this with the hypothesis that $2\ell -n\geq k+1 $, we have $|\gamma \cap \gamma^\prime|\geq k+1$. In other words, $\gamma \mbox{ and }\gamma^\prime$ has a common cell of coordinate $(.,1)$ or $(1,.)$. It follows that $\gamma = \gamma^\prime$. The second statement of Lemma~\ref{lem:unique-rim hook} is obtained by taking $\ell=n-k$.
\end{proof}

By Lemma~\ref{lem:unique-rim hook}, there is at most one rim hook of length $k$ in $\lambda\vdash n$, whenever $2\leq k \leq \left\lfloor \frac{n-1}{3}  \right\rfloor$. Since $k$ belongs to this interval (by the hypothesis of Theorem~\ref{thm:main}), the non-zero eigenvalues of $\Gamma_{n,k}$ are of the form
\begin{align}
	\xi_{\chi^\lambda} &= (-1)^{\ell\ell(\rho)}\frac{f^{\lambda\setminus \rho}}{f^\lambda} \binom{n}{k} (n-k-1)!,\label{ex:evalues}
\end{align}
	where $\lambda \vdash n$ and $\rho$ is the unique rim hook of length $n-k$ of $\lambda$. We prove that the maximum of $\left\{\xi_{\chi^\lambda}\right\}_{\lambda \vdash n, \lambda \neq [n],[1^n]}$ is attained by the partition $[n-1,1]$. The transpose $[2,1^{n-2}]$ of $[n-1,1]$ also gives the second eigenvalue of $\Gamma_{n,k}$, depending on the parity of $n$ and $k$.
	
	Now, we divide the proof of Theorem~\ref{thm:main} into two cases: the cases $k=2$ and $k\geq 3$.
	\\ \\
	\noindent{\bf a) When $k = 2$}
	\\ \\
	By Lemma~\ref{lem:unique-rim hook}, $\lambda\vdash n$ has at most one rim hook of length $n-2$. If $\lambda\vdash n$ does not have a rim hook of length $n-2$, then the corresponding eigenvalue is equal to $0$. If $\rho$ is the unique rim hook of length $n-2$ of $\lambda \vdash n$, then $\lambda\setminus \rho \in \left\{\ytableausetup{mathmode,boxsize=0.7em}\ydiagram{2},\ytableausetup{mathmode,boxsize=0.7em}\ydiagram{1,1}\right\}$. Hence, the dimension of the $(\lambda\setminus\rho)$-Specht module is always equal to $1$. Consequently, the non-zero eigenvalues of $\Gamma_{n,2}$ are
	\begin{align*}
		\xi_{\chi^\lambda} &=  (-1)^{\ell\ell(\rho)}\frac{1}{f^\lambda} \binom{n}{k} (n-k-1)!,
	\end{align*}
	where $\lambda\vdash n$ has a rim hook of length $n-2$. It is easy to see that the second eigenvalue of $\Gamma_{n,2}$ is obtained from the irreducible characters of smallest dimension which is not equal to $1$. When $n\geq 7$, it is well-known that the only irreducible characters of $\sym(n)$ of degree less than $n$ are those corresponding to the partitions $[n],\ [1^n],\ [n-1,1]$ and $[2,1^{n-2}]$. It is obvious that the partition giving the second eigenvalue (independent of $k$) is $[n-1,1]$ since the leg-length of a rim hook of length $n-2$ on $[n-1,1]$ is equal to $0$. When $3\leq n \leq 6$, we use \verb|Sagemath| \cite{sagemath} to verify that the second eigenvalue is given by $[n-1,1]$. Therefore, $\lambda_2(\Gamma_{n,2}) = \frac{1}{n-1}\binom{n}{2}(n-3)!$.
	\\ \\
	\noindent{\bf b) When $k\geq 3$}
	\\ \\
	First, we recall a basic result about the largest dimension of the irreducible characters of a finite group. If $G$ is a finite group, then we let $b(G)$ be the largest dimension of an irreducible character of $G$. We recall the following well-known result (see \cite{halasi2016largest} for example).

\begin{prop}[\cite{halasi2016largest}]
	If $G$ is a finite group, then $b(G) \leq \sqrt{|G|}$.\label{prop:max-degree}
\end{prop}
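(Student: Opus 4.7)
The plan is to derive the bound directly from the fundamental identity
\begin{align*}
|G| = \sum_{\chi \in \operatorname{Irr}(G)} \chi(1)^2,
\end{align*}
which is a standard consequence of decomposing the regular representation $\mathbb{C}[G]$ into its isotypic components: each irreducible $G$-module $S^\chi$ appears in $\mathbb{C}[G]$ with multiplicity equal to $\chi(1)$, so $|G| = \dim \mathbb{C}[G] = \sum_\chi \chi(1)\cdot\chi(1)$. With this identity in hand, the argument is immediate: by definition $b(G) = \chi_0(1)$ for some $\chi_0 \in \operatorname{Irr}(G)$, so $b(G)^2$ appears as one of the summands on the right, and every other summand is a non-negative integer. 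Hence
\begin{align*}
b(G)^2 \leq \sum_{\chi \in \operatorname{Irr}(G)} \chi(1)^2 = |G|,
\end{align*}
and taking square roots gives $b(G) \leq \sqrt{|G|}$.

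There is essentially no obstacle here; the result is a one-line deduction from a textbook fact in character theory. The only minor presentational choice is whether to invoke the sum-of-squares identity as a black box from a standard reference, or to rederive it on the spot by evaluating the character $\chi_{\mathrm{reg}}$ of the regular representation at the identity, using $\chi_{\mathrm{reg}} = \sum_\chi \chi(1)\chi$ together with $\chi_{\mathrm{reg}}(e) = |G|$. Since Proposition~\ref{prop:max-degree} is invoked only as a citation to \cite{halasi2016largest}, I would adopt the shorter route and simply record the two-line argument above, noting that equality holds only in the trivial case $|G| = 1$.
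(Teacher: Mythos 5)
Your argument is correct: the bound follows immediately from the identity $|G| = \sum_{\chi \in \operatorname{Irr}(G)} \chi(1)^2$, since $b(G)^2$ is one of the (nonnegative) summands. The paper itself offers no proof of Proposition~\ref{prop:max-degree}, only the citation to \cite{halasi2016largest}, and your two-line deduction from the regular-representation decomposition is exactly the standard argument one would supply in its place.
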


Next, we present a lemma on the low dimensional irreducible characters of $\sym(n)$. 
\begin{lem}\label{lem:irrecharless}
	Let $n \geq 19$. If $\phi$ is a character of $\mathrm{Sym}(n)$ of degree less than $3\binom{n}{3}$ and $\chi^{\lambda}$ a constituent of $\phi$, then $\lambda$ is one of  $[n],[1^n],[n-1,1],[2,1^{n-2}],[n-2,2],[2^2,1^{n-4}],[n-2,1^2],[3,1^{n-3}],[n-3,3],[2^3,1^{n-6}],[n-3,1^3],[4,1^{n-4}]$ $, [n-3,2,1]$, $\mbox{or } [3,2,1^{n-5}].$
\end{lem}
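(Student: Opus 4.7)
The key reduction is that if $\chi^\lambda$ is an irreducible constituent of $\phi$, then $f^\lambda \leq \phi(1) < 3\binom{n}{3} = \frac{n(n-1)(n-2)}{2}$. The lemma therefore becomes an assertion about the irreducible characters of $\mathrm{Sym}(n)$ of small degree, namely to identify all $\lambda \vdash n$ with $f^\lambda < \frac{n(n-1)(n-2)}{2}$.

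First, I would verify that every partition in the list satisfies this bound. For two-row partitions one uses the closed form $f^{[n-k,k]} = \binom{n}{k} - \binom{n}{k-1}$; for hook shapes one uses $f^{[n-k,1^k]} = \binom{n-1}{k}$; and for $[n-3,2,1]$, a direct hook length computation gives $f^{[n-3,2,1]} = \frac{n(n-2)(n-4)}{3}$. Since conjugate partitions have equal dimensions, these formulas cover all fourteen entries of the list, and a routine arithmetic comparison shows each value is strictly less than $\frac{n(n-1)(n-2)}{2}$.

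The converse is the heart of the proof: any $\lambda \vdash n$ not in the list should satisfy $f^\lambda \geq \frac{n(n-1)(n-2)}{2}$. The listed partitions are precisely those with $\max(\lambda_1, \lambda'_1) \geq n - 3$, so I would split the remaining shapes according to the value of $\max(\lambda_1, \lambda'_1)$. For $\max(\lambda_1, \lambda'_1) = n - 4$, the five partitions $[n-4,4], [n-4,3,1], [n-4,2,2], [n-4,2,1^2], [n-4,1^4]$ and their conjugates are handled directly by the hook length formula; the tightest inequality comes from $f^{[n-4,1^4]} = \binom{n-1}{4}$, and the condition $\binom{n-1}{4} \geq \frac{n(n-1)(n-2)}{2}$ reduces to $n^2 - 19n + 12 \geq 0$, which is precisely the origin of the hypothesis $n \geq 19$. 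For $\max(\lambda_1, \lambda'_1) \leq n - 5$, a general lower bound is required; I would either invoke Rasala's classification of the smallest degrees of $\mathrm{Sym}(n)$ or argue by induction on $n$ using the branching rule $f^\lambda = \sum_{c} f^{\lambda \setminus c}$ summing over removable corners of $\lambda$.

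The main obstacle is this last sub-case, since it covers infinitely many partition shapes and cannot be closed out by finite case analysis alone. The cleanest resolution is to appeal to Rasala's theorem, which guarantees $f^\lambda \geq f^{[n-5,5]} = \frac{n(n-1)(n-2)(n-3)(n-9)}{120}$ for any such $\lambda$, and this lower bound comfortably exceeds $\frac{n(n-1)(n-2)}{2}$ as soon as $n \geq 15$, in particular for all $n \geq 19$.
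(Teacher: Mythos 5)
The paper does not prove this lemma itself --- it cites \cite[Lemma~3.4 and Remark~3.5]{behajaina20203}, whose argument is essentially the route you take: reduce to bounding $f^\lambda$ for irreducible constituents, verify the fourteen listed shapes by closed dimension formulas, and eliminate every remaining shape via Rasala-type minimal-degree bounds. Your sketch is correct and matches that approach; the only cosmetic remark is that at the $\max(\lambda_1,\lambda_1')=n-4$ level the partition $[n-4,4]$ is at least as binding as $[n-4,1^4]$, since $\binom{n}{4}-\binom{n}{3}\geq 3\binom{n}{3}$ holds exactly when $n\geq 19$ (with equality at $n=19$), and the monotonicity statement $f^\lambda\geq f^{[n-5,5]}$ for $\lambda_1,\lambda_1'\leq n-5$ that you attribute to Rasala does hold in the range needed.
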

A proof of Lemma~\ref{lem:irrecharless} can be found in \cite[Lemma~3.4 and Remark~3.5]{behajaina20203}. 

Suppose that $n\geq 19$. It is easy to see that \eqref{ex:evalues} depends only on $(-1)^{\ell\ell(\rho)}\frac{f^{\lambda\setminus \rho}}{f^\lambda}$. Using Proposition~\ref{prop:max-degree}, if $f^\lambda > 3\binom{n}{3}$, then 
\begin{align*}
	\frac{f^{\lambda\setminus \rho}}{f^\lambda} \leq \frac{b(\sym(k))}{3\binom{n}{3}}\leq \frac{\sqrt{k!}}{3\binom{n}{3}}.
\end{align*}
Using the classical bound $\frac{k^k}{e^{k-1}} \leq k! \leq \frac{k^{k+1}}{e^{k-1}}$, it is easy to verify that when\\ $3\leq k \leq \min \left(n,2\log_{\frac{k}{e}} \left(\frac{n(n-2)}{2e} \right)-1\right)$, we have $\sqrt{k!} \leq \frac{3}{n-1}\binom{n}{3}$. Consequently, if $f^\lambda > 3\binom{n}{3}$, then
\begin{align}
	0 < \frac{f^{\lambda\setminus \rho}}{f^\lambda} \leq \frac{1}{n-1}. \label{eq:to-conj}
\end{align}

Now, we compute the eigenvalues that correspond to the irreducible characters of $\sym(n)$ with dimension less than $3\binom{n}{3}$. The trivial character always affords the valency of $\Gamma_{n,k}$, which is $c_{n,k} :=  \binom{n}{k} (n-k-1)!$. We note that if $\lambda\vdash n$ and $\lambda^\prime$ is the transpose of $\lambda$, then $\chi^{\lambda^\prime}(\sigma) = \chi^{[1^n]}(\sigma)\chi^\lambda(\sigma)$, for any $\sigma\in \sym(n)$. Therefore, the eigenvalue afforded by the partition $[1^n]$ is either equal to $c_{n,k}$ or $-c_{n,k}$, in which case $\Gamma_{n,k}$ is bipartite. The eigenvalue corresponding to $[1^n]$ is precisely $(-1)^{n-k-1}c_{n,k}$. The eigenvalues  of $\Gamma_{n,k}$ corresponding to the other irreducible characters of dimension less than $3\binom{n}{3}$ are given in Table~\ref{table:eigenvalues}. These values were computed using the Murnaghan-Nakayama rule and the Hook Length Formula.
\begin{table}[H]
	\begin{tabular}{|c|c|}
		\hline
		Partition & Eigenvalue\\
		\hline
		\hline
		$[n-1,1]$ & $\frac{k-1}{n-1} c_{n,k}$\\
		\hline
		$[2,1^{n-2}]$ & $(-1)^{n-k+1}\frac{k-1}{n-1} c_{n,k}$\\
		\hline
		$[n-2,2]$ & $
		\frac{\binom{k}{2} - k}{\binom{n}{2}-n} c_{n,k}
		$\\
		\hline
		$[2^2,1^{n-4}]$ & $(-1)^{n-k+1}\frac{\binom{k}{2} - k}{\binom{n}{2}-n} c_{n,k}$
		 \\
		\hline
		$[n-2,1^2]$ & 
		$\frac{\binom{k-1}{2}}{\binom{n-1}{2}}c_{n,k}$\\
		\hline
		$[3,1^{n-3}]$ & $(-1)^{n-k+1}\frac{\binom{k-1}{2}}{\binom{n-1}{2}}c_{n,k}$\\
		\hline
		$[n-3,3]$ & 
		$\frac{\binom{k}{3}-\binom{k}{2}}{\binom{n}{3}- \binom{n}{2}}c_{n,k}$
		\\
		\hline
		$[2^3,1^{n-6}]$ & $(-1)^{n-k+1}\frac{\binom{k}{3}-\binom{k}{2}}{\binom{n}{3}- \binom{n}{2}}c_{n,k}$ \\
		\hline
		$[n-3,1^{3}]$ & 
		$\frac{\binom{k-1}{3}}{\binom{n-1}{3}}c_{n,k}$
		\\
		\hline
		$[4,1^{n-4}]$ & $(-1)^{n-k+1}\frac{\binom{k-1}{3}}{\binom{n-1}{3}}c_{n,k}$\\
		\hline
		$[n-3,2,1]$ &  $\frac{k(k-2)(k-4)}{n(n-2)(n-4)}c_{n,k}$ \\
		\hline
		$[3,2,1^{n-5}]$ &$(-1)^{n-k+1}\frac{k(k-2)(k-4)}{n(n-2)(n-4)}c_{n,k}$\\
		\hline
	\end{tabular}
	\caption{Eigenvalues $\Gamma_{n,k}$ corresponding to irreducible characters of degree less than $3\binom{n}{3}$.}\label{table:eigenvalues}
\end{table}
An analysis of the eigenvalues in Table~\ref{table:eigenvalues} shows that the largest eigenvalue is $\frac{k-1}{n-1}c_{n,k} = \frac{k-1}{n-1}\binom{n}{k}(n-k-1)!$, when $n\geq 19$ and $3\leq k\leq n-2$. Moreover, this eigenvalue is afforded by the irreducible character of $\sym(n)$ corresponding to the partition $[n-1,1]$. 

We conclude that $\lambda_2(\Gamma_{n,k}) = \frac{k-1}{n-1}c_{n,k}$, when $n\geq 19$ and $3\leq k\leq \min \left(n,2\log_{\frac{k}{e}} \left(\frac{n(n-2)}{2e} \right)-1\right)$. For the cases where $3 \leq n\leq 18$ and $3\leq k\leq n-2$, we use \verb|Sagemath| \cite{sagemath} to verify that the second eigenvalue is given by the partition $[n-1,1]$ and it is equal to $\lambda_2(\Gamma_{n,k}) = \frac{k-1}{n-1}c_{n,k}$. This completes the proof of Theorem~\ref{thm:main}.

\subsection{Proof of Theorem~\ref{thm:large}}\label{sect:proof2}
In this section, we use Table~\ref{table:eigenvalues} to retrieve the results of Siemons and Zalesskki in Theorem~\ref{thm:large}.
\subsubsection*{The case $k=0$}
The eigenvalue afforded by the irreducible character corresponding to $[n-1,1]$ is $\frac{-c_{n,0}}{n-1}<0$. When $n$ is odd, it is easy to see that the largest eigenvalue in Table~\ref{table:eigenvalues} is  
$\binom{n-1}{2}^{-1}c_{n,0} = 2(n-3)!$, which is afforded by  $[n-2,1^2]$. When $n$ is even, the largest eigenvalue is $\frac{c_{n,0}}{n-1} = (n-2)!$, which is afforded by $[2,1^{n-2}]$.
\subsubsection*{The case $k=1$}
The eigenvalues afforded by $[n-1,1]$ and $[2,1^{n-2}]$ are both equal to $0$ in this case. When $n$ is odd, the second eigenvalue is afforded by $[2^2,1^{n-4}]$ and is equal to $(-1)^{n}\frac{-1}{\binom{n}{2}-n} c_{n,1} = 2(n-2)(n-4)!$. When $n$ is even, the eigenvalue afforded by $[2^2,1^{n-4}]$ is negative and the second eigenvalue of $\Gamma_{n,k}$ is equal to $\frac{(-1)(-3)}{n(n-2)(n-4)}c_{n,1} = 3(n-3)(n-5)!$, which is afforded by $[n-3,2,1]$.

\section{Second eigenvalue of $\Gamma_{n,n-5}$}\label{sect:5-cycles}
In this section, we prove Theorem~\ref{thm:main2}.
\subsection{Equitable partitions}
In this subsection, we show that the eigenvalue afforded by the irreducible character corresponding to the partition $[n-1,1]$ appears as an eigenvalue of an equitable partition of $\Gamma_{n,k}$.

The following lemma is straightforward and is given without a proof (see \cite[Lemma~5]{Cioaba2019second}).

\begin{lem}
	Let $G$ be a finite group and $H\leq G$. If $X = \operatorname{Cay}(G,C)$ is a Cayley graph of $G$, then the partition of $X$ into left cosets of $H$ is an equitable partition of $X$.\label{lem:equitable-subgroup}
\end{lem}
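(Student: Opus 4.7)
The plan is to exhibit a group of automorphisms of $\cay(G,C)$ whose orbit partition on the vertex set is precisely the partition of $G$ into left cosets of $H$, and then invoke the standard fact that any orbit partition of a graph under a group of automorphisms is equitable.

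First, for each $h\in H$, I would define $r_h:G\to G$ by $r_h(g)=gh$, i.e.\ right multiplication by $h$. The first step is to check that $r_h\in \Aut(\cay(G,C))$. Since the adjacency rule in $\cay(G,C)$ is $x\sim y\iff yx^{-1}\in C$, a direct computation gives $r_h(y)\,r_h(x)^{-1}=(yh)(xh)^{-1}=yh h^{-1}x^{-1}=yx^{-1}$, so $r_h$ preserves adjacency. The map $h\mapsto r_h$ is then readily seen to be a homomorphism from $H$ into $\Aut(\cay(G,C))$.

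Next, I would identify the orbits of this $H$-action on the vertex set $G$: the orbit of $g\in G$ is $\{gh:h\in H\}=gH$, which is exactly the left coset of $H$ containing $g$. Hence the orbit partition of $H$ acting by right multiplication coincides with the partition into left cosets of $H$.

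Finally, I would invoke the standard observation that orbit partitions of a graph under a group of graph automorphisms are equitable. Concretely, given two cosets $g_1H$ and $g_2H$ and two vertices $x,x'\in g_1H$, I would pick $h\in H$ with $x'=xh$; then $r_h$ maps $g_2H$ to itself and, being a graph automorphism, restricts to a bijection between the neighbors of $x$ in $g_2H$ and the neighbors of $x'$ in $g_2H$. The two neighborhood counts therefore agree, which is exactly the equitability condition. No genuine obstacle is expected here: the whole argument rests on the single identity $(yh)(xh)^{-1}=yx^{-1}$, which expresses the fact that the right regular action of $G$ on itself commutes with the ``difference'' $yx^{-1}$ used to define adjacency in a Cayley graph.
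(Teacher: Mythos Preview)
Your argument is correct: right multiplication by $H$ gives a subgroup of $\Aut(\cay(G,C))$ whose orbits are precisely the left cosets $gH$, and orbit partitions under automorphism groups are equitable. The paper itself does not supply a proof of this lemma, calling it straightforward and referring to \cite[Lemma~5]{Cioaba2019second}; your write-up is exactly the standard justification one would give.
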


Let $G := \sym(n)$ and for any $i\in \{1,2,\ldots,n\}$, let $G_i$ be the stabilizer of $i$ in the natural action of $G$ on the set $\{1,2,\ldots,n\}$. For any $s,t \in \{1,2,\ldots,n\}$, we define $G_{t,s} := \{ \sigma \in G \mid \sigma(t) =s \}$. By Lemma~\ref{lem:equitable-subgroup}, the partition $G/G_i$ is equitable for any $i\in \{1,2,\ldots,n\}$. If $B_{\Pi_i} = (b_{s,t})_{s,t \in \{1,2,\ldots,n\} }$ is the quotient matrix corresponding to the equitable partition $\Pi_i$ of $\Gamma_{n,k}$ given by $G/G_i$, then by \cite{Cioaba2019second}
\begin{align*}
	b_{s,t} = |C(n,k)\cap G_{t,s}|=
	\left\{
		\begin{aligned}
			&\binom{n-1}{n-k}(n-k-1)! \ \ \ \ \  &\mbox{ if } t=s\\
			&\binom{n-2}{n-k-2}(n-k-2)! \ \ \ \ \  &\mbox{ if } t\neq s.
		\end{aligned}
	\right.
\end{align*}
It is not hard to see that 
\begin{align*}
	B_{\Pi_i} &= \binom{n-2}{n-k-2}(n-k-2)!(J-I) + \binom{n-1}{n-k}(n-k-1)! I,
\end{align*}
where $J$ is the $n\times n$ all ones matrix and $I$ is the $n\times n$ identity matrix. The eigenvalues of $B_{\Pi_i}$ are
\begin{align*}
	\binom{n}{k}(n-k-1)! \mbox{ and } \frac{k-1}{n-1}\binom{n}{k}(n-k-1)!,
\end{align*}
which are the eigenvalues afforded by the irreducible characters corresponding to $[n]$ and $[n-1,1]$, respectively.

\subsection{A recursive method}
In \cite{Cioaba2019second}, Huang et al. gave a recursive method to compute the second eigenvalue of highly transitive groups.

Throughout this subsection, we let $G\leq \sym(\Omega)$ be a finite transitive group acting on $\Omega = \{1,2,\ldots,n\}$. Let $G^{(0)} =G$ and for any $k\geq 1$, let 
\begin{align*}
	G^{(k)} &= G_{n} \cap G_{n-1} \cap \ldots \cap G_{n-k+1}.
\end{align*} Let $T$ be a union of conjugacy classes of $G$.
Define 
\begin{align*}
	\left\{
		\begin{aligned}
			T_k &= T_{k-1} \setminus \left( T_{k-1} \cap G_k \right), \mbox{ for any  }k\geq 1,\\
			T_0 &= T.
		\end{aligned}
	\right.
\end{align*}
In other words, if $\operatorname{Supp(\sigma)} = \{ i\in \Omega \mid \sigma(i) \neq i \}$, then $T_k = \{ \sigma \in T \mid \{1,2,\ldots,k\}\subset \operatorname{supp}(\sigma)  \}$. For any $i\geq 0$ and $k\geq 0$, let
\begin{align}
	X_{k,i} &= \cay(G^{(i)},T_k\cap G^{(i)}).\label{eq:graph-smaller-recursion}
\end{align}

Let $\Pi$ be the partition of $G$ into left cosets of any stabilizer of a point of $G$. From the results in the previous subsection, we know that this partition is equitable. Let $B_\Pi$ be the quotient matrix corresponding to this equitable partition. By Lemma~\ref{lem:equitable-subgroup}, the partition of $G^{(i)}$ into left cosets of any of its point-stabilizers is an equitable partition of $X_{k,i}$, for any $k\geq 0$. Let $B_\Pi^{(k,i)}$ be the quotient matrix of this equitable partition. The second eigenvalue $\lambda_2\left(B_\Pi^{(k,i)}\right)$ of $B^{(k,i)}_\Pi$ was computed in \cite{Cioaba2019second} and it is equal to
\begin{align*}
	\lambda_2\left(B^{(k,i)}_\Pi\right) &= |T_k\cap G^{(i)} \cap G_{k+1}| - |T_k\cap G^{(i)}\cap G_{k+2,k+1}|.
\end{align*}

When $G$ is highly transitive, the second eigenvalue of the normal Cayley graph $\cay(G,T)$ can be computed via a recursive method on the graphs defined in \eqref{eq:graph-smaller-recursion}. 
\begin{lem} \cite[Theorem~14]{Cioaba2019second}
	Let $m = \displaystyle\max_{\sigma \in T}|\operatorname{supp}(\sigma)|$. If $G$ is $(m+a)$-transitive for some $a\geq 1$ and $\lambda_2(X_{k,a-1}) = \lambda_2\left(B_\Pi^{(k,a-1)}\right)$ for any $0\leq k\leq m-1$, then $$\lambda_2(\cay(G,T)) = \lambda_2(X_{0,0}) = \lambda_2(B_\Pi).$$\label{lem:recursive}
\end{lem}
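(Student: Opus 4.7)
The plan is to prove the lemma by a joint induction on the pair $(k,i)$, starting from the base case $i = a-1$ (where the hypothesis applies for all $0 \le k \le m-1$) and reducing every $X_{k,i}$ to cases with strictly larger $i$ or $k$. The fundamental tool is Lemma~\ref{lem:equitable-subgroup}: for each pair $(k,i)$, the partition of $G^{(i)}$ into left cosets of a point-stabilizer of $G^{(i)}$ is equitable for $X_{k,i}$, so the eigenvalues of $B_\Pi^{(k,i)}$ are eigenvalues of $X_{k,i}$ and Haemers' interlacing theorem yields $\lambda_2(X_{k,i}) \ge \lambda_2(B_\Pi^{(k,i)})$. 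The entire task is to prove the reverse inequality under the hypothesis.

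I would decompose $\mathbb{C}[G^{(i)}]$, on which the adjacency operator of $X_{k,i}$ acts, into the cell-constant subspace, whose spectrum is exactly that of $B_\Pi^{(k,i)}$, and its orthogonal complement consisting of vectors summing to zero on each coset. For the orthogonal complement, I would aim to establish a reduction inequality of the shape
\[
\lambda_2(X_{k,i}) \;\le\; \max\Bigl(\lambda_2\bigl(B_\Pi^{(k,i)}\bigr),\ \lambda_2(X_{k,i+1}),\ \lambda_2(X_{k+1,i})\Bigr),
\]
where the two shifted graphs enter naturally: $X_{k,i+1}$ is the subgraph of $X_{k,i}$ induced on a single coset of $G^{(i+1)}$ (after relabeling a fixed point, which is legitimate by the $(m+a)$-transitivity of $G$), while $X_{k+1,i}$ captures the part of $T_k\cap G^{(i)}$ that additionally moves one more specified point. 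Iterating this reduction, any eigenvalue of $X_{0,0}=\cay(G,T)$ strictly greater than $\lambda_2(B_\Pi)$ would eventually have to appear as an eigenvalue of some $X_{k,a-1}$ strictly greater than $\lambda_2(B_\Pi^{(k,a-1)})$, contradicting the hypothesis. This would force $\lambda_2(\cay(G,T)) = \lambda_2(B_\Pi)$ and complete the proof.

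The main obstacle I expect is establishing the reduction inequality precisely, namely verifying that a cell-orthogonal eigenvector of $X_{k,i}$ with eigenvalue $\mu$ really produces a \emph{non-trivial} eigenvector of one of the two smaller graphs with the same eigenvalue $\mu$ (rather than collapsing to zero or to a cell-constant vector at the smaller scale). The cleanest avenue is representation-theoretic: the connection set $T_k\cap G^{(i)}$ is a union of conjugacy classes of $G^{(i)}$, so the adjacency operator is scalar on each isotypic component of $\mathbb{C}[G^{(i)}]$; the cell-constant subspace is exactly the sum of isotypic components containing $(G^{(i)})_j$-fixed vectors, and the orthogonal complement is the sum of the remaining isotypics. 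Frobenius reciprocity, together with the $(m+a)$-transitivity of $G$, should then identify each such residual isotypic with one appearing in an induced representation from $G^{(i+1)}$, or with the ``moved-point'' component arising from splitting $T_k$ into $T_{k+1}$ and $T_k\setminus T_{k+1}$, which is precisely what allows the eigenvalue to be transferred to $X_{k,i+1}$ or $X_{k+1,i}$.
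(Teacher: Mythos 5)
First, a point of comparison: the paper does not prove this lemma at all --- it is quoted from \cite[Theorem~14]{Cioaba2019second} --- so your write-up has to stand on its own as a proof of that cited theorem, and it does not. The only parts of your outline that are actually established are the routine ones: that the coset partition is equitable (Lemma~\ref{lem:equitable-subgroup}), hence $\lambda_2(X_{k,i})\geq\lambda_2\bigl(B_\Pi^{(k,i)}\bigr)$, and that, \emph{granted} the reduction inequality $\lambda_2(X_{k,i})\leq\max\bigl(\lambda_2\bigl(B_\Pi^{(k,i)}\bigr),\lambda_2(X_{k,i+1}),\lambda_2(X_{k+1,i})\bigr)$, an induction terminating at $i=a-1$ gives the conclusion. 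But that reduction inequality, which you explicitly defer as the main obstacle, \emph{is} the substantive content of Theorem~14 of \cite{Cioaba2019second}; leaving it as a target to be ``aimed at'' means the proposal is a plan around the theorem rather than a proof of it. It is not even clear the inequality holds in the exact max form you state without the careful coset-block analysis done in the source.

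Second, the representation-theoretic route you propose for closing the gap fails at its first step. You assert that $T_k\cap G^{(i)}$ is a union of conjugacy classes of $G^{(i)}$, so that the adjacency operator of $X_{k,i}$ is scalar on each isotypic component of $\mathbb{C}[G^{(i)}]$. This is false for $k\geq 1$: conjugating $\sigma\in T_k$ by $g\in G^{(i)}$ replaces $\operatorname{supp}(\sigma)$ by $g(\operatorname{supp}(\sigma))$, and $g$ need not map $\{1,\dots,k\}$ into $\operatorname{supp}(\sigma)$, so $g\sigma g^{-1}$ generally violates the support condition defining $T_k$ (already $T_1$ is not closed under conjugation). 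Consequently $X_{k,i}$ is not a normal Cayley graph for $k\geq 1$, the two summands of its adjacency matrix coming from the split $T_k\cap G^{(i)}=\bigl(T_k\cap G^{(i)}\cap G_{k+1}\bigr)\sqcup\bigl(T_{k+1}\cap G^{(i)}\bigr)$ need not commute, and splitting the connection set only yields additive Rayleigh-quotient bounds, not the max-type bound you want. In addition, the cell-constant subspace for the coset partition is the permutation module induced from the trivial character of the point stabilizer sitting inside $\mathbb{C}[G^{(i)}]$, which is \emph{not} a sum of full isotypic components, so the identification of the cell-orthogonal complement with ``the remaining isotypics'' is also incorrect. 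The non-normality of $T_k\cap G^{(i)}$ is precisely the difficulty that the combinatorial coset-by-coset argument of \cite{Cioaba2019second} is designed to handle; invoking Frobenius reciprocity and high transitivity does not substitute for it.
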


In the next subsection, we find the second eigenvalue of $\Gamma_{n,n-5}$ using this recursive method. 

\subsection{Proof of Theorem~\ref{thm:main2}}
Let $T = C(n,n-5)$ be the conjugacy class of $5$-cycles of $\sym(n)$. Our main tool to prove Theorem~\ref{thm:main2} is Lemma~\ref{lem:recursive}. Since $\langle T\rangle = \alt(n)$, the graph $\Gamma_{n,n-5}$ is disconnected and is the disjoint union of two copies of $\cay(\alt(n),T)$. Therefore, $\lambda_2(\Gamma_{n,n-5}) = \lambda_2(\cay(\alt(n),T))$. Let us apply Lemma~\ref{lem:recursive} on $X = \cay(\alt(n),T)$.

As the elements of $T$ are $5$-cycles, $m= \displaystyle\max_{\sigma \in T}|\operatorname{supp}(\sigma)| = 5$. Since $\alt(n)$ is $(n-2)$-transitive, it is easy to see that $\alt(n)$ is $(m+a)$-transitive, whenever $a \leq n-7$. Let $a = n-7$ and $G = \alt(n)$. Now, it is enough to verify that 
\begin{align}
	\lambda_2(X_{k,n-8}) = |T_k\cap G^{(n-8)} \cap G_{k+1}| - |T_k\cap G^{(n-8)}\cap G_{k+2,k+1}|, \label{eq:equality-5-cycle}
\end{align}
for any $0\leq k \leq 4$. 

Note that $G^{(n-8)} = \alt(8)$ and for any $0\leq k \leq 4$, $X_{k,n-8} = \cay(\alt(8), T_k \cap \alt(8))$. Using \verb*|Sagemath| \cite{sagemath} and \verb*|SciPy| \cite{virtanen2020scipy}, we were able to verify \eqref{eq:equality-5-cycle} for any $0\leq k\leq 4$. We have compiled in the following table the values of $\lambda_2(X_{k,n-8})$ and $\lambda_1(X_{k,n-8})$.
\begin{table}[H]
	\begin{tabular}{|c|c|c|}
		\hline
		$k$ & $\lambda_1(\cay(\alt(8), T_k \cap \alt(8))) $ & $\lambda_2(\cay(\alt(8), T_k \cap \alt(8)))$ \\
		\hline\hline
		$0$ & $1344$ & $384$ \\
		$1$ & $840$ & $300$ \\
		$2$ & $480$ & $216$  \\
		$3$ & $240$ & $138$ \\
		$4$ & $96$ & $72$ \\
		\hline 
	\end{tabular}
\end{table}
By Lemma~\ref{lem:recursive}, we conclude that $\lambda_2(\Gamma_{n,n-5}) = \frac{n-6}{n-1} \binom{n}{5} 4! = \frac{n(n-2)(n-3)(n-4)(n-6)}{5}$.

%\bibliographystyle{plain}
%\bibliography{ref}

\begin{thebibliography}{10}
	
	\bibitem{babai1979spectra}
	L.~Babai.
	\newblock {S}pectra of {C}ayley graphs.
	\newblock {\em Journal of Combinatorial Theory, Series B}, {\bf
		27}(2):180--189, 1979.
	
	\bibitem{behajaina20203}
	A.~Behajaina, R.~Maleki, A.~T. Rasoamanana, and A.~S. Razafimahatratra.
	\newblock 3-setwise intersecting families of the symmetric group.
	\newblock {\em Discrete Mathematics}, {\bf 344}(8):112467, 2021.
	
	\bibitem{cvetkovic1995second}
	D.~Cvetkovi{\'c} and S.~Simi{\'c}.
	\newblock The second largest eigenvalue of a graph (a survey).
	\newblock {\em Filomat}, pages 449--472, 1995.
	
	\bibitem{diaconis1981generating}
	P.~Diaconis and M.~Shahshahani.
	\newblock Generating a random permutation with random transpositions.
	\newblock {\em Zeitschrift f{\"u}r Wahrscheinlichkeitstheorie und verwandte
		Gebiete}, {\bf 57}(2):159--179, 1981.
	
	\bibitem{godsil2016erdos}
	C.~Godsil and K.~Meagher.
	\newblock {\em {E}rd\H{o}s-{K}o-{R}ado {T}heorems: {A}lgebraic {A}pproaches}.
	\newblock Cambridge University Press, 2016.
	
	\bibitem{halasi2016largest}
	Z.~Halasi, C.~Hannusch, and H.~Nguyen.
	\newblock The largest character degrees of the symmetric and alternating
	groups.
	\newblock {\em Proceedings of the American Mathematical Society}, {\bf
		144}(5):1947--1960, 2016.
	
	\bibitem{huang2019second}
	X.~Huang and Q.~Huang.
	\newblock The second largest eigenvalues of some {C}ayley graphs on alternating
	groups.
	\newblock {\em Journal of Algebraic Combinatorics}, {\bf 50}(1):99--111, 2019.
	
	\bibitem{Cioaba2019second}
	X.~Huang, Q.~Huang, and S.~M. Cioab{\u{a}}.
	\newblock The second eigenvalue of some normal {C}ayley graphs of highly
	transitive groups.
	\newblock {\em The Electronic Journal of Combinatorics}, {\bf 26}(2), 2019.
	
	\bibitem{liu2018eigenvalues}
	X.~Liu and S.~Zhou.
	\newblock Eigenvalues of {C}ayley graphs.
	\newblock {\em arXiv preprint arXiv:1809.09829}, 2018.
	
	\bibitem{neumaier1982second}
	A.~Neumaier.
	\newblock The second largest eigenvalue of a tree.
	\newblock {\em Linear Algebra and its Applications}, {\bf 46}:9--25, 1982.
	
	\bibitem{nilli1991second}
	A.~Nilli.
	\newblock On the second eigenvalue of a graph.
	\newblock {\em Discrete Mathematics}, {\bf 91}(2):207--210, 1991.
	
	\bibitem{sagan2001symmetric}
	B.~E. Sagan.
	\newblock {\em The Symmetric Group: Representations, Combinatorial Algorithms,
		and Symmetric Functions (Graduate Texts in Mathematics)}.
	\newblock New York: Springer, 2001.
	
	\bibitem{siemons2020second}
	J.~Siemons and A.~Zalesski.
	\newblock On the second largest eigenvalue of some {C}ayley graphs of the
	symmetric group.
	\newblock {\em arXiv preprint arXiv:2012.12460}, 2020.
	
	\bibitem{sagemath}
	{The Sage Developers}.
	\newblock {\em {S}ageMath, the {S}age {M}athematics {S}oftware {S}ystem
		({V}ersion 8.9)}, 2020.
	\newblock {\tt https://www.sagemath.org}.
	
	\bibitem{virtanen2020scipy}
	P.~Virtanen et~al.
	\newblock Sci{P}y 1.0: fundamental algorithms for scientific computing in
	{P}ython.
	\newblock {\em Nature methods}, {\bf 17}(3):261--272, 2020.
	
\end{thebibliography}

\end{document}